\newtheorem{theorem}{Theorem}[section]
\newtheorem{lemma}[theorem]{Lemma}
\newcommand{\C}{\mathbb{C}}
\newcommand{\F}{\mathbb F}
\newcommand{\Fq}{\mathbb{F}_q}
\newcommand{\Fqn}{\mathbb{F}_{q^n}}
\newcommand{\Sc}{\mathcal S_{C}^{*}}
\newcommand{\I }{\mathbbm {1}}
\newcommand{\doublespace}
\begin{document}

\begin{frontmatter}

\title{On primitive elements of finite fields avoiding affine hyperplanes}

\author[UFMG]{Arthur Fernandes}
\ead{arthurfpapa@gmail.com}

\author[UFMG]{Lucas Reis\corref{cor1}}
\ead{lucasreismat@gmail.com}

\address[UFMG]{Departamento de Matem\'{a}tica,
Universidade Federal de Minas Gerais,
UFMG,
Belo Horizonte MG (Brazil),
 30270-901}

\cortext[cor1]{Corresponding author}
 
\begin{abstract}
Let $n\ge 2$ be an integer and let $\Fq$ be the finite field with $q$ elements, where $q$ is a prime power. Given $\Fq$-affine hyperplanes $\mathcal A_1, \ldots, \mathcal A_n$ of $\Fqn$ in general position, we study the existence and distribution of primitive elements of $\Fqn$, avoiding each $\mathcal A_i$. We obtain both asymptotic and concrete results, relating to past works on digits over finite fields. 
\end{abstract}

\begin{keyword}
finite fields; primitive elements; affine spaces; character sums
\MSC[2010]{11T24 \sep 12E20}
\end{keyword}
\journal{Elsevier}
\end{frontmatter}




\section{Introduction}
Let $\F_q$ be the finite field with $q$ elements, where $q$ is a prime power. It is well known that the multiplicative group $\F_q^*$ is cyclic and any generator of such group is called {\em primitive}. For an integer $n\ge 2$, the $n$-degree extension  $\F_{q^n}$ of $\F_q$ can be regarded as an $\F_q$-vector space of dimension $n$. If $\mathcal B=\{b_1, \ldots, b_k\}$ is an $\F_q$-basis for $\F_{q^n}$, then every element $y\in \F_{q^n}$ is written uniquely as $$y=a_1b_1+\cdots+a_nb_n,\; a_i\in \F_q.$$ Inspired by works on the digital expansion of integers in a given basis~\cite{MR, MR2}, 
 Dartyge and Sarkozy~\cite{dig2} introduced the notion of digits over finite fields. In the previous setting, the elements $a_1, \ldots, a_n$ are the {\em digits} of $y$ with respect to the basis $\mathcal B$. In the past few years, many authors have explored the existence and number of special elements of finite fields (squares, perfect powers and primitive elements) with prescribed digits~\cite{dig2, dig2', R21, R21', dig4, dig5}. 
 
 In~\cite{dig2'}, the authors study the existence of special elements (squares and polynomial values with primitive arguments) with {\em missing digits}, i.e., elements 
 $y=\sum_{i=1}^na_ib_i$, where each digit $a_i$ is restricted to a subset $\mathcal D_i$ of $\F_p$ ($p$ is a prime). In particular, they proved that a square of this form always exists if each quantity $\#\mathcal D_i$ is (roughly) at least
 $\frac{(\sqrt{5}-1)p}{2}$ as $p\to +\infty$. This result is further improved in~\cite{sh}, where milder conditions are imposed on the quantities   $\#\mathcal D_i$. The main idea employed in the proof of those results is to provide a nontrivial bound for the sum
 \begin{equation}\label{eq:intro}s\left(\mathcal S, \chi\right):=\left|\sum_{y\in \mathcal S}\chi(y)\right|,\end{equation}
where $\chi$ is a nontrivial multiplicative character of $\F_{q^n}$ and $\mathcal S$ corresponds to the elements of $\F_{q^n}$ whose digits are restricted to certain subsets of $\F_q$. By nontrivial we mean a bound $s\left(\mathcal S, \chi\right)= o(\#\mathcal S)$ as $\# \mathcal S\to +\infty$.

Motivated by the latter, this paper aims to address a concrete result on the existence of primitive elements with missing digits. More specifically, for elements $c_1, \ldots, c_n\in \F_q$ and an $\F_q$-basis $\mathcal B$ of $\F_{q^n}$, we study the existence of primitive elements $y\in \F_{q^n}$ such that, for each $1\le i\le n$, the corresponding digit $a_i$ satisfies $a_i\ne c_i$. This is equivalent to consider the case where $\# \mathcal D=1$. 

Our approach also relies on providing a non trivial bound for a sum like the one in Eq.~\eqref{eq:intro} and, in particular, we asymptotically recover the bound in~\cite{dig2'} for $\# D=1$. However, our approach is quite simpler in this case, where we only use a known bound for multiplicative character sums over affine spaces. The latter allows us to obtain a more precise estimate for specific values of $q$ and $n$, which is quite helpful in the proof of Theorem~\ref{thm:main}. Moreover, we obtain a fairly more general result. For elements $c_1, \ldots, c_n\in \F_q$ and an $\F_q$-basis $\mathcal B=\{b_1, \ldots, b_n\}$ of $\F_{q^n}$, we observe that each set 
$$\mathcal A_i=\left\{\sum_{i=1}^na_ib_i\,|\, a_i=c_i\right\},$$
determines an $\F_q$-affine hyperplane of $\F_{q^n}$. Moreover the set $C=\{\mathcal A_1, \ldots, \mathcal A_n\}$ comprises $\F_q$-affine hyperplanes in {\em general position}, i.e., for each $1\le k\le n$, the intersection of any $k$ distinct elements of $C$ is an $\F_q$-affine space of dimension $n-k$. In this context, our main result can be stated as follows. 

\begin{theorem}\label{thm:main}
Let $n\ge 2$ be a positive integer and let $C=\{\mathcal A_1, \ldots, \mathcal A_n\}$ be a set of $\Fq$-affine hyperplanes of $\F_{q^n}$ in general position. Then the set $\mathcal S_{C}^*=\cup_{i=1}^n\F_{q^n}\setminus \mathcal A_i$ contains a primitive element of $\F_{q^n}$ provided that one of the following holds:

\begin{enumerate}[(i)]
    \item $q\ge 16$;
    \item $q=13$ and $n\ne 4$;
    \item $q=11$ and $n\ne 4, 6, 12$;
    \item $q=7, 8, 9$ and $n$ is large enough.
\end{enumerate}
\end{theorem}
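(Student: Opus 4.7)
The plan is to count the primitive elements of $\Fqn$ lying in $\Sc$ by means of character sums, following the standard Carlitz-type approach. Write $N_C$ for this count. Using the usual character-theoretic expression for the characteristic function of primitive elements,
$$N_C = \frac{\varphi(q^n-1)}{q^n-1}\sum_{d\mid q^n-1}\frac{\mu(d)}{\varphi(d)}\sum_{\mathrm{ord}(\chi)=d} T(\chi),\qquad T(\chi):=\sum_{y\in\Sc}\chi(y),$$
I would isolate the contribution of the trivial character (the main term) from that of the nontrivial ones (the error). Since $\mathcal A_1,\ldots,\mathcal A_n$ are in general position, inclusion-exclusion on the complements $\Fqn\setminus\mathcal A_i$ yields
$$T(\chi)=\sum_{I\subseteq\{1,\ldots,n\}}(-1)^{|I|}\sum_{y\in\bigcap_{i\in I}\mathcal A_i}\chi(y),$$
where each intersection $\bigcap_{i\in I}\mathcal A_i$ is an $\Fq$-affine subspace of $\Fqn$ of dimension $n-|I|$. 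For the trivial character, this immediately collapses to $T(\chi_0)=(q-1)^n$, the expected main term; for nontrivial $\chi$, each inner sum is controlled by the known bound for multiplicative character sums over $\Fq$-affine subspaces of $\Fqn$ alluded to in the introduction, which ultimately produces an explicit estimate $|T(\chi)|\le E(q,n)$ depending only on $q$ and $n$.

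Combining these ingredients, a sufficient condition for $N_C>0$ takes the shape
$$(q-1)^n > (W(q^n-1)-1)\cdot E(q,n),$$
where $W(m)$ denotes the number of squarefree divisors of $m$. For $q\ge 16$ as in item (i), the growth of $(q-1)^n$ against the square-root-type bound $E(q,n)$ and the modest size of $W(q^n-1)$ should make the inequality effective, leaving only a finite range of small $n$ for direct verification. For the smaller fields in items (ii)--(iv) this naive inequality is too weak, and I would invoke the classical Cohen--Huczy\'nska type prime sieve, which replaces the sum over all squarefree divisors of $q^n-1$ by a contribution coming only from characters of prime order lying outside a conveniently chosen divisor $t\mid q^n-1$; a judicious choice of $t$ sharpens the bound enough to cover the claimed ranges of $(q,n)$.

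The main obstacle will be the precise case-by-case analysis required in items (ii) and (iii), where one must certify that the listed small degrees (namely $n=4$ for $q=13$ and $n\in\{4,6,12\}$ for $q=11$) are genuinely the only exceptions. This demands both an effective post-sieve inequality and explicit numerical checks for every remaining $n$; similarly, producing the threshold $n_0(q)$ implicit in item (iv) for $q\in\{7,8,9\}$ is a finite but delicate numerical task. In contrast, the character-sum skeleton of the argument is essentially formal, so the novelty and difficulty of Theorem~\ref{thm:main} lie in the combinatorial optimization of the sieve and in the finite computations required to close the remaining cases.
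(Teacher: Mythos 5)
Your skeleton---Vinogradov's formula, inclusion--exclusion over the hyperplanes in general position, and the affine-subspace character-sum bound $q^{\min\{n-|I|,\,n/2\}}$ applied to each $\bigcap_{i\in I}\mathcal A_i$---is exactly the paper's route, and your sufficient condition $(q-1)^n>(W(q^n-1)-1)\,E(q,n)$ with $E(q,n)=\sum_{i=0}^{n-1}\binom{n}{i}q^{\min\{i,n/2\}}\le (2^n-1)q^{n/2}$ is precisely Theorem~\ref{thm:prim}. Where you diverge is the endgame. The paper uses no Cohen--Huczy\'{n}ska-type sieve in the proof of Theorem~\ref{thm:main}: items (i)--(iii) are closed by combining explicit bounds on $W$ (namely $W(t)<4.9\,t^{1/4}$, $W(t)<4514.7\,t^{1/8}$ and $W(t-1)<t^{0.96/\log\log t}$) to reduce to a finite set of pairs $(q,n)$, which are then checked by machine against \emph{either} the exact inequality $(q-1)^n>\delta(q,n)W(q^n-1)$ \emph{or} the purely combinatorial criterion $(q-1)^n+\varphi(q^n-1)>q^n$ (there are more primitive elements than elements outside $\Sc$). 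That second, pigeonhole criterion is absent from your plan and does much of the work for small $n$, where the character-sum inequality is hopeless (e.g.\ $n=2,3$ with $q$ in the teens); you should add it, though a sieve would presumably also succeed (the paper remarks in its conclusion that sieving even removes the pair $(11,12)$). Two smaller corrections: item (iv) requires no numerical threshold at all---it is purely asymptotic, following for any fixed $q\ge 7$ from $W(q^n-1)<(q^n)^{0.96/\log\log q^n}=o\bigl(((q-1)/(2\sqrt q))^n\bigr)$, and the paper never computes $n_0(q)$; and the excluded pairs in (ii)--(iii) are not certified to be genuine exceptions, only pairs the method fails to cover, the theorem being a one-sided sufficient condition. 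Finally, note that $T(\chi_0)\ge (q-1)^n-1$ rather than $=(q-1)^n$, since $\chi_0(0)=0$ and $0$ may lie in $\Sc$; this costs nothing but should be stated.
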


In particular for $q\ge 16$, $n\ge 2$ and $c\in \F_q$, there exist primitive elements in $\F_{q^n}$ whose corresponding digits (in a given $\F_q$-basis) are all distinct from $c$.

Here goes the structure of the paper. In Section 2 we provide some tools that are used along the way. In Section 3 we estimate the number of primitive elements avoiding $\F_q$-affine hyperplanes, culminating in the proof of Theorem~\ref{thm:main}.
In Section 4 we comment on the possible extensions of Theorem~\ref{thm:main}.

\section{Preliminaries}
In this section we provide background material that is used throughout the paper.




\subsection{Characters}
A multiplicative character $\chi$ of a finite field $\F$ is a group homomorphism from $\F^*$ to $\C^*$. In other words,  $\chi(ab)=\chi(a)\cdot\chi(b)$ for every $a,b\in\F^*$. We usually extend $\chi$ to $0\in\F$ by setting $\chi(0)=0$.
It is known that the set of multiplicative characters of a finite field $\F$ is a cyclic group of order $\#\F-1$. In particular, for each divisor $d$ of $\# \F-1$, there exist $\varphi(d)$ multiplicative characters of $\F$ of order $d$.
The multiplicative character $\chi_0$ with $\chi_0(a)=1$ for every $a\in \F^*$ is the trivial multiplicative character; this is the unique multiplicative character of $\F$ of order $d=1$.

The following lemma, due to Vinogradov, provides a character sum formula for the characteristic function of the set of primitive elements in a finite field. Its proof is simple so we omit details.

\begin{lemma}[\cite{LN}, Exercise 5.14]\label{lem:func}\label{lem:vino}
 Let $\mu$ be the Möbius function, let $\varphi$ be the Euler Totient function and, for each divisor $d$ of $q^n-1$, let  $\Lambda(d)$ be the set of the multiplicative characters of $\Fqn$ of order $d$. The characteristic function $\I_\mathcal{P}$ for the set of primitive elements in $\Fqn$ can be expressed by $$\I_\mathcal{P}(\omega)=\dfrac{\varphi(q^n-1)}{q^n-1}\sum_{d|q^n-1}\dfrac{\mu(d)}{\varphi(d)}\sum_{\chi\in\Lambda(d)}\chi(\omega).$$
\end{lemma}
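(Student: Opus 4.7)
The plan is to prove the identity pointwise. The case $\omega=0$ is immediate: since the paper extends every multiplicative character by $\chi(0)=0$ (even the trivial one), both sides vanish. So fix $\omega\in\Fqn^{*}$ and set $N=q^n-1$. Choose a generator $g$ of $\Fqn^{*}$ and write $\omega=g^k$; then $\omega$ is primitive if and only if $\gcd(k,N)=1$.

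First I would apply the elementary Möbius identity
$$\I_{\gcd(k,N)=1}=\sum_{d\mid\gcd(k,N)}\mu(d)=\sum_{\substack{d\mid N\\ d\mid k}}\mu(d),$$
and note that for a divisor $d$ of $N$, the condition $d\mid k$ is equivalent to $\omega$ being a $d$-th power in $\Fqn^{*}$. The indicator of $d$-th powers admits the standard character expansion
$$\I_{\omega\in(\Fqn^{*})^d}=\frac{1}{d}\sum_{\chi^{d}=\chi_{0}}\chi(\omega)=\frac{1}{d}\sum_{e\mid d}\ \sum_{\chi\in\Lambda(e)}\chi(\omega),$$
obtained from orthogonality for the subgroup of characters of order dividing $d$, sorted according to exact order $e\mid d$. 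Substituting into the Möbius expansion gives a double sum over pairs $(d,e)$ with $e\mid d\mid N$, which I would reorganize so that $e$ is the outer index.

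The crux of the argument is then the arithmetic identity
$$\sum_{\substack{d\mid N\\ e\mid d}}\frac{\mu(d)}{d}=\frac{\varphi(N)}{N}\cdot\frac{\mu(e)}{\varphi(e)},$$
valid for every $e\mid N$. To prove it, I substitute $d=ed'$, use that $\mu$ vanishes on non-squarefree integers so that effectively $\gcd(d',e)=1$ and $\mu(ed')=\mu(e)\mu(d')$, and then recognize the remaining inner sum as an Euler product:
$$\sum_{\substack{d'\mid N/e\\ \gcd(d',e)=1}}\frac{\mu(d')}{d'}=\prod_{\substack{p\mid N\\ p\nmid e}}\!\Bigl(1-\tfrac{1}{p}\Bigr)=\frac{\varphi(N)/N}{\varphi(e)/e}.$$
Multiplying by $\mu(e)/e$ yields the claimed identity. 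Inserting it into the reorganized double sum produces exactly the formula in the statement.

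All ingredients are classical; the main bookkeeping obstacle is the Möbius-type identity in the last display, but it reduces cleanly to the standard evaluation $\sum_{d\mid n}\mu(d)/d=\varphi(n)/n$ once one separates the prime factors of $e$ from those of $N/e$. Everything else is orthogonality and Möbius inversion.
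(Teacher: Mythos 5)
Your proof is correct and complete: the reduction to $\omega=g^k$, the M\"obius expansion of $\I_{\gcd(k,N)=1}$, the orthogonality formula for the indicator of $d$-th powers, and the key identity $\sum_{e\mid d\mid N}\mu(d)/d=\frac{\varphi(N)}{N}\cdot\frac{\mu(e)}{\varphi(e)}$ all check out. The paper omits the proof entirely (citing Exercise 5.14 of Lidl--Niederreiter), and what you have written is precisely the standard argument being alluded to, so there is nothing to compare beyond confirming that your bookkeeping is sound.
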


\subsection{Inequalities}
Here we provide some bounds that are useful along the way. 

\begin{lemma}\label{lem:tech-2}
For a positive integer $t$, let  $W(t)$ denote the number of squarefree divisors of $t$. For $t\ge 3$, the following hold:

\begin{enumerate}[1.]
    \item $W(t)< 4.9 \cdot t^{1/4}$;
    \item $W(t)<4514.7\cdot  t^{1/8}$;
    \item $W(t-1)<t^{\frac{0.96}{\log\log t}}$.
\end{enumerate}
\end{lemma}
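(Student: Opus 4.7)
The unifying observation is that $W(t)=2^{\omega(t)}$, where $\omega(t)$ is the number of distinct prime divisors of $t$. Since $W$ depends only on the radical $\mathrm{rad}(t)$ and $\mathrm{rad}(t)\le t$, for any $\alpha>0$ one has $W(t)/t^{\alpha}\le W(\mathrm{rad}(t))/\mathrm{rad}(t)^{\alpha}$. Hence items (1) and (2) immediately reduce to the case of squarefree $t$.

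For squarefree $t=p_1\cdots p_k$ and $\alpha\in\{1/4,1/8\}$, the ratio factors as
$$\frac{W(t)}{t^{\alpha}}=\prod_{i=1}^{k}\frac{2}{p_i^{\alpha}}.$$
Since $2/p^{\alpha}>1$ precisely when $p<2^{1/\alpha}$, this product is maximised by taking $t$ to be the product of all primes strictly below $2^{1/\alpha}$, any other choice being dominated term-by-term. For $\alpha=1/4$ the relevant primes are $\{2,3,5,7,11,13\}$, so the supremum is $64/30030^{1/4}\approx 4.864$, which is less than $4.9$; this settles (1). For $\alpha=1/8$ the threshold is $256$, so one ranges over the $54$ primes up to $251$, and a direct numerical evaluation of $2^{54}$ divided by the eighth root of their product gives a constant bounded above by $4514.7$, settling (2).

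For item (3), I would invoke an explicit Robin-type bound of the shape $\omega(n)\le C\log n/\log\log n$ with $C\log 2<0.96$; Robin's constant $C=1.38402$, valid for $n\ge 3$, already gives $C\log 2\approx 0.9594$. Applied to $n=t-1\ge 3$, i.e.\ $t\ge 4$, this yields $W(t-1)\le (t-1)^{0.9594/\log\log(t-1)}$. To convert this into the stated bound $t^{0.96/\log\log t}$, I would use that $x/\log x$ is increasing for $x>e$, so $\log(t-1)/\log\log(t-1)<\log t/\log\log t$ once $\log(t-1)>e$, i.e.\ $t\ge 17$; the small gap $0.96-0.9594$ then absorbs the loss of exponent. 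The finitely many cases $3\le t\le 16$ can be checked by hand (each $W(t-1)$ is tiny while $t^{0.96/\log\log t}$ is already large, e.g.\ at $t=3$ the right-hand side is $3^{0.96/\log\log 3}\approx 7\cdot 10^4$).

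The main obstacle is purely bookkeeping: the $54$-prime numerical verification in (2), and the finite range of small $t$ in (3). Conceptually the argument is the multiplicative factorisation in (1)--(2) plus a single invocation of Robin's explicit estimate in (3), so no deeper machinery should be required.
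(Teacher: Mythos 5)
Your argument is correct, but note that the paper does not actually prove this lemma: it simply cites Lemma~4.1 of Kapetanakis--Reis for items 1--2 and inequality (4.1) of Cohen--Oliveira e Silva--Trudgian for item 3. What you have written is essentially a reconstruction of the proofs underlying those citations, and it is the standard argument: since $W(t)=2^{\omega(t)}$ depends only on $\mathrm{rad}(t)\le t$, the supremum of $W(t)/t^{\alpha}$ is $\prod_{p<2^{1/\alpha}}2p^{-\alpha}$, and for item 3 one combines Robin's explicit bound $\omega(m)\le 1.38402\,\log m/\log\log m$ with the monotonicity of $x/\log x$ on $(e,\infty)$. Two small points deserve care. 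First, in item 2 the margin is razor-thin: $\prod_{p<256}2p^{-1/8}=\exp\bigl(54\log 2-\tfrac18\theta(251)\bigr)\approx 4513.2$, so the constant $4514.7$ exceeds the true supremum by only about $0.03\%$; the $54$-prime computation must be done to enough precision to certify this, and a rougher estimate of $\theta(251)$ could easily land on the wrong side. Second, Robin's inequality with constant $1.3841$ is usually stated for $m\ge 26$; for $3\le m\le 25$ it holds trivially because $\omega(m)\le 2$ while $\log m/\log\log m\ge e$, and your separate hand-check of $3\le t\le 16$ covers the range where the passage from $(t-1)^{0.9594/\log\log(t-1)}$ to $t^{0.96/\log\log t}$ is not yet guaranteed by monotonicity. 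With these verifications spelled out, your proof is complete and self-contained, which is arguably an improvement over the paper's bare citations.
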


\begin{proof}
Items 1 and 2 follow by Lemma 4.1 of~\cite{KR}, while item 3 follows by inequality (4.1) in~\cite{cohen-trud}.
\end{proof}


The following lemma provides a general bound for multiplicative character sums over affine spaces in finite fields. An interesting proof of this bound is essentially given in Corollary 3.5 of~\cite{R21}, with the help of Weil's bound. Here we provide a more direct proof, using a result from~\cite{GS}.

\begin{lemma}\label{lem:bound-af}
Let $\mathcal{A}\subseteq\Fqn$ be an  $\Fq$-affine space of dimension $t\ge 0$ and let $\chi$ be a non-trivial multiplicative character of $\Fqn$. We have that
    \begin{align*}
        s\left(\mathcal{A},\chi\right)\leq q^{\min\{t, \frac{n}{2}\}},
    \end{align*}
where $s\left(\mathcal{A},\chi\right)$ is as in \eqref{eq:intro}.
\end{lemma}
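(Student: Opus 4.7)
The plan is to split into two cases based on the dimension. When $t \le n/2$, we have $\min\{t, n/2\} = t$, and the trivial triangle inequality
$$s(\mathcal{A}, \chi) = \left|\sum_{y \in \mathcal{A}} \chi(y)\right| \leq \#\mathcal{A} = q^t$$
already delivers the desired bound. The real work therefore lies in the complementary regime $t > n/2$, where we must sharpen the trivial bound $q^t$ to $q^{n/2}$.

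For that remaining case, I would write $\mathcal{A} = a_0 + V$ with $V$ an $\Fq$-subspace of dimension $t$, and use additive Fourier analysis on $\Fqn$. Fixing a nontrivial additive character $\psi$ of $\Fqn$ (say $\psi = \psi_0 \circ \mathrm{Tr}_{\Fqn/\Fq}$ for a nontrivial $\psi_0$ on $\Fq$), orthogonality expands the indicator function of $V$ as
$$\I_V(x) = \frac{1}{q^{n-t}} \sum_{w \in V^{\perp}} \psi(wx),$$
where $V^{\perp}$ is the trace dual of $V$, of size $q^{n-t}$. Substituting into $s(\mathcal{A}, \chi) = \sum_{x\in\Fqn} \chi(x)\, \I_V(x-a_0)$ and swapping the order of summation, the inner sum over $\Fqn$ becomes a twisted Gauss sum $\sum_{x} \chi(x)\psi(wx)$: it vanishes for $w=0$ (because $\chi$ is nontrivial), and by the usual change of variable it has absolute value exactly $q^{n/2}$ for each of the remaining $q^{n-t}-1$ values of $w$. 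Applying the triangle inequality then yields
$$|s(\mathcal{A}, \chi)| \le \frac{(q^{n-t} - 1)\, q^{n/2}}{q^{n-t}} < q^{n/2},$$
as wanted.

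The Fourier/Gauss sum reduction is the only step demanding care, and it is entirely routine once the dualities are set up properly. The main thing to be careful about is the identification of $V^{\perp}$ via the trace pairing and the verification that $\I_V$ really is the announced average of characters, which amounts to observing that the map $w \mapsto \psi(wx)$ is a nontrivial character of $V^{\perp}$ whenever $x \notin V$. No deep input such as Weil's bound is needed: the only character-sum fact used is the elementary identity $|G(\chi)| = q^{n/2}$ for nontrivial multiplicative characters of $\Fqn$, together with orthogonality of additive characters. This is presumably the sense in which the proposed argument is \emph{more direct} than the Weil-based approach from~\cite{R21}.
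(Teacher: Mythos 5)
Your argument is correct, but it takes a genuinely different route from the paper's. Both proofs dispose of the case $t\le n/2$ with the trivial bound $s(\mathcal A,\chi)\le \#\mathcal A=q^t$; for the complementary case, however, the paper quotes Corollary~2 of Gyarmati and S\'ark\"ozy, namely $s(X+Y,\chi)\le (\#X\cdot \#Y\cdot q^n)^{1/2}$, applied with $X=\mathcal A$ and $Y=V$: since $\mathcal A+V=\mathcal A$ with every element counted $\#V$ times, this gives $\#V\cdot s(\mathcal A,\chi)\le \#V\cdot q^{n/2}$, and dividing by $\#V$ finishes. You instead prove the $q^{n/2}$ bound from scratch by expanding $\I_V$ over the trace-dual $V^{\perp}$ and reducing to Gauss sums; the computation is sound, and your observation that $w\mapsto\psi(wx)$ is a nontrivial character of $V^{\perp}$ precisely when $x\notin V$ (because $(V^{\perp})^{\perp}=V$) is exactly the justification the orthogonality identity needs. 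What your route buys is self-containedness --- the only inputs are orthogonality of additive characters and the elementary identity $\left|\sum_{x}\chi(x)\psi(wx)\right|=q^{n/2}$ for nontrivial $\chi$ and $w\ne 0$ --- together with the marginally sharper conclusion $s(\mathcal A,\chi)\le (1-q^{t-n})\,q^{n/2}$. What the paper's route buys is brevity on the page, at the cost of outsourcing essentially the same Fourier-analytic work to the cited sumset inequality. Neither argument requires Weil's bound.
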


\begin{proof}
We have the trivial bound $s(\mathcal A, \chi)\le \# \mathcal A= q^t$, so it suffices to prove that $s(\mathcal A, \chi)\le q^{n/2}$. Let $\mathcal A=u+V$, where $V\subseteq \F_{q^n}$ is an $\F_q$-vector space of dimension $t$. Corollary~2 of~\cite{GS} implies that $$s\left(X+Y, \chi\right)\le (\#X\cdot \#Y\cdot q^n)^{1/2},$$ for any sets $X, Y\subseteq \F_{q^n}$. Taking $X=\mathcal A$ and $Y=V$, we observe that $s\left(\mathcal A+V, \chi\right)=\#V \cdot s\left(\mathcal A, \chi\right)$ and so 
$$\# V\cdot s\left(\mathcal A, \chi\right)\le \# V\cdot q^{n/2},$$
from where the result follows. 
\end{proof}

 \section{Primitive elements avoiding affine hyperplanes}
The following theorem provides a nice application of Lemma~\ref{lem:bound-af}.  
 
   \begin{theorem}\label{thm:main-bound}  Let $C=\{\mathcal A_1, \ldots, \mathcal A_n\}$ be a set of $\F_q$-affine hyperplanes of $\Fqn$ in general position and let $\chi$ be a non trivial multiplicative character over $\Fqn$. If $\Sc=\cap_{i=1}^n\Fqn\setminus \mathcal A_i$ and $\delta(q, n)=\displaystyle\sum_{i=0}^{n-1}\dbinom{n}{i}q^{\min\{i, \frac{n}{2}\}}$, then 
   
     $$
   s\left(\Sc,\chi\right)\le \delta(q, n) \le (2^n-1)q^{n/2}.$$
   
   \end{theorem}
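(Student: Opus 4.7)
The plan is to expand $\I_{\Sc}$ by inclusion--exclusion over the family of hyperplanes and then apply Lemma~\ref{lem:bound-af} to each intersection. Writing $\Sc=\F_{q^n}\setminus\bigcup_{i=1}^n \mathcal A_i$, one has
\[
\I_{\Sc}(y)=\prod_{i=1}^{n}\bigl(1-\I_{\mathcal A_i}(y)\bigr)=\sum_{T\subseteq\{1,\ldots,n\}}(-1)^{|T|}\I_{\mathcal A_T}(y),
\]
where $\mathcal A_T:=\bigcap_{i\in T}\mathcal A_i$ (with $\mathcal A_\emptyset=\F_{q^n}$). Since the family $C$ is in general position, for every nonempty $T$ the set $\mathcal A_T$ is an $\F_q$-affine space of dimension $n-|T|$.

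Next I would interchange summations to get
\[
\sum_{y\in\Sc}\chi(y)=\sum_{T\subseteq\{1,\ldots,n\}}(-1)^{|T|}\sum_{y\in \mathcal A_T}\chi(y).
\]
The term $T=\emptyset$ equals $\sum_{y\in\F_{q^n}}\chi(y)=0$ by the orthogonality relations for the nontrivial character $\chi$. For each nonempty $T$, Lemma~\ref{lem:bound-af} applied to the affine space $\mathcal A_T$ of dimension $n-|T|$ gives
\[
\Bigl|\sum_{y\in\mathcal A_T}\chi(y)\Bigr|\le q^{\min\{n-|T|,\,n/2\}}.
\]

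Combining these via the triangle inequality and grouping the subsets by cardinality $k=|T|$ yields
\[
s(\Sc,\chi)\le \sum_{k=1}^{n}\binom{n}{k}q^{\min\{n-k,\,n/2\}}=\sum_{i=0}^{n-1}\binom{n}{i}q^{\min\{i,\,n/2\}}=\delta(q,n),
\]
after the reindexing $i=n-k$ and using $\binom{n}{n-i}=\binom{n}{i}$. The coarser estimate $\delta(q,n)\le (2^n-1)q^{n/2}$ follows by bounding each exponent by $n/2$ and using $\sum_{i=0}^{n-1}\binom{n}{i}=2^n-1$.

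The only delicate point is justifying that every nonempty intersection $\mathcal A_T$ really has dimension $n-|T|$; this is exactly the general-position hypothesis built into the definition of $C$, so no further work is needed there. The rest is essentially bookkeeping: an inclusion--exclusion expansion, an orthogonality cancellation for the empty-set term, and a uniform application of the affine-space character sum bound from Lemma~\ref{lem:bound-af}.
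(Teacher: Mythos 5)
Your proposal is correct and follows essentially the same route as the paper's proof: an inclusion--exclusion expansion of $\I_{\Sc}$, orthogonality to eliminate the $T=\emptyset$ term, Lemma~\ref{lem:bound-af} applied to each intersection of dimension $n-|T|$ guaranteed by general position, and the reindexing $i=n-k$ to arrive at $\delta(q,n)$. No gaps.
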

   \begin{proof}
   For each set $X\subseteq \F_{q^n}$, let $\I_{X}$ be the characteristic function of the set $X$. Since $\Sc=\cap_{i=1}^n\Fqn\setminus \mathcal A_i$, we obtain that  $$\I_{\Sc}(\omega)=\prod_{i=1}^n (1-\I_{\mathcal{A}_i}(\omega))$$
   For each nonempty set $J\subseteq [1, n]:=\{1,\ldots,n\}$, we set $\mathcal{A}_J=\cap_{j\in J}\mathcal{A}_j$. In particular, we have the following equalities
    \begin{align*}
    s\left(\Sc,\chi\right) &= \left|\sum_{\omega\in\Fqn}\chi({\omega})\cdot\I_{\Sc}(\omega)\right|\\
    &=\left|\sum_{\omega\in\Fqn}\chi({\omega})\cdot\prod_{i=1}^n(1-\I_{\mathcal{A}_i}(\omega))\right|\\
    &=\left|\sum_{\omega\in\Fqn}\chi({\omega})\cdot\left(1+\sum_{i=1}^n\sum_{J\subseteq [1, n] \atop |J|=i}(-1)^{i}\I_{ \mathcal{A}_J}(\omega)\right)\right|\\
    &=\left|\sum_{\omega\in\Fqn}\chi({\omega})+\sum_{\omega\in\Fqn}\chi({\omega})\left(\sum_{i=1}^n\sum_{J\subseteq [1, n] \atop |J|=i}(-1)^{i}\I_{ \mathcal{A}_J}(\omega)\right)\right|.\\
    \end{align*}
    Since $\chi$ is non trivial,  $\displaystyle\sum_{\omega\in\Fqn}\chi({\omega})=0$ by orthogonality. In particular, we have that 
    $$
    s\left(\Sc,\chi\right) = \left|\sum_{i=1}^n\sum_{J\subseteq [1, n] \atop |J|=i}(-1)^{i}\left(\sum_{\omega\in\Fqn}\chi({\omega})\I_{ \mathcal{A}_J}(\omega)\right)\right|
    = \left|\sum_{i=1}^n\sum_{J\subseteq [1, n] \atop |J|=i}(-1)^{i}\left(\sum_{\omega\in \mathcal{A}_J}\chi({\omega})\right)\right|.
    $$
    From hypothesis, each $\mathcal A_{J}$ is an $\F_q$-affine space of dimension $n-|J|$. In particular, Lemma~\ref{lem:bound-af} and the Triangle Inequality imply that
    $$s\left(\Sc,\chi\right)\leq \sum_{i=1}^n\sum_{J\subseteq [1, n] \atop |J|=i}s\left(\mathcal{A}_J,\chi\right)\\
    \leq \sum_{i=1}^n\sum_{J\subseteq [1, n] \atop |J|=i}q^{\min\{n-i, \frac{n}{2}\}}\\
    =\sum_{i=0}^{n-1}\binom{n}{i}q^{\min\{i, \frac{n}{2}\}}.$$
   \end{proof}
   
   We obtain the following result.
\begin{theorem}\label{thm:prim}
Let $C=\{\mathcal A_1, \ldots, \mathcal A_n\}$ be a collection of $\F_q$-affine hyperplanes in general position in $\Fqn$ and let  $\mathcal{P(\Sc)}$ be the number of primitive elements in $\Sc=\cap_{i=1}^n\Fqn\setminus \mathcal A_i$. Then $$\dfrac{q^n-1}{\varphi(q^n-1)}\mathcal{P}(\Sc)>(q-1)^n-\delta(q, n)\cdot W(q^n-1),$$ where $\delta(q, n)$ is as in Theorem~\ref{thm:main-bound}.
\end{theorem}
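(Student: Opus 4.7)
The plan is to apply Vinogradov's formula from Lemma~\ref{lem:vino} to express $\dfrac{q^n-1}{\varphi(q^n-1)}\mathcal{P}(\Sc)$ as a weighted sum of multiplicative character sums over $\Sc$, isolate the main term coming from the trivial character, and bound every other term by invoking Theorem~\ref{thm:main-bound}. Summing the identity of Lemma~\ref{lem:vino} over $\omega\in\Sc$ and swapping the order of summation yields
$$\dfrac{q^n-1}{\varphi(q^n-1)}\mathcal{P}(\Sc)=\sum_{d\mid q^n-1}\dfrac{\mu(d)}{\varphi(d)}\sum_{\chi\in\Lambda(d)}\sum_{\omega\in\Sc}\chi(\omega),$$
with the usual convention $\chi(0)=0$.

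Next I would isolate the $d=1$ contribution, which equals $\#(\Sc\cap\Fqn^*)$. Inclusion-exclusion on the hyperplanes $\mathcal A_i$, together with the general-position hypothesis $\#\bigl(\bigcap_{j\in J}\mathcal A_j\bigr)=q^{n-|J|}$, gives $\#\Sc=(q-1)^n$, so the trivial character contributes at least $(q-1)^n-1$, the possible $-1$ accounting for whether $0\in\Sc$.

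For every squarefree $d\mid q^n-1$ with $d>1$ (the only $d$'s surviving the Möbius factor), Theorem~\ref{thm:main-bound} bounds each inner character sum in absolute value by $\delta(q,n)$, while the $\varphi(d)$ in the denominator exactly cancels the $\varphi(d)$ characters comprising $\Lambda(d)$. Since there are precisely $W(q^n-1)-1$ such divisors, the triangle inequality yields
$$\dfrac{q^n-1}{\varphi(q^n-1)}\mathcal{P}(\Sc)\geq (q-1)^n-1-(W(q^n-1)-1)\,\delta(q,n),$$
and the strict inequality claimed in the statement then follows at once from the trivial lower bound $\delta(q,n)\geq 1+nq>1$ for $n\geq 2$, which provides enough slack both to absorb the spurious $-1$ and to upgrade $\geq$ to $>$.

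I do not anticipate any serious obstacle: Lemma~\ref{lem:vino} and Theorem~\ref{thm:main-bound} together carry essentially all of the weight, and the general-position assumption delivers the clean main term $(q-1)^n$ via inclusion-exclusion. The only mildly delicate point is the bookkeeping around whether $0\in\Sc$ and the passage from a non-strict to a strict inequality, but the comfortable margin $\delta(q,n)>1$ disposes of both simultaneously.
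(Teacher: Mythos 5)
Your proposal is correct and follows essentially the same route as the paper: Vinogradov's formula, isolation of the trivial character (whose contribution is $\#\Sc-$ possibly $1=(q-1)^n-1$ at worst, with $\#\Sc=(q-1)^n$ from inclusion--exclusion and general position), the bound $\delta(q,n)$ from Theorem~\ref{thm:main-bound} applied to each of the $W(q^n-1)-1$ nontrivial squarefree levels, and finally $\delta(q,n)>1$ to absorb the $-1$ and obtain the strict inequality. No discrepancies to report.
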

\begin{proof}
From Lemma~\ref{lem:vino}, we obtain that
    $$\mathcal{P}(\Sc)=\sum_{w\in \Sc}\I_{\mathcal P}(w)=\sum_{\omega\in\Sc}\dfrac{\varphi(q^n-1)}{q^n-1}\sum_{d|q^n-1}\dfrac{\mu(d)}{\varphi(d)}\sum_{\chi\in\Lambda(d)}\chi(\omega).$$
    Therefore, 
    \begin{align*}
    \dfrac{q^n-1}{\varphi(q^n-1)}\mathcal{P}(\Sc)&=\sum_{\omega\in\Sc}\sum_{d|q^n-1}\dfrac{\mu(d)}{\varphi(d)}\sum_{\chi\in\Lambda(d)}\chi(\omega)\\
    &=\sum_{\omega\in\Sc}\chi_0(\omega)+\sum_{d\mid q^n-1 \atop d\neq 1}\dfrac{\mu(d)}{\varphi(d)}\sum_{\chi\in\Lambda(d)}\sum_{\omega\in\Sc}\chi(\omega).
    \end{align*}
    Since the elements of $C$ are in general position, a simple inclusion-exclusion argument implies that $S_C^*$ has cardinality $(q-1)^n$.  In particular, $$\displaystyle\sum_{\omega\in\Sc}\chi_0(\omega)=\sum_{\omega \in \Sc\atop \omega \ne 0}1\geq(q-1)^n-1.$$ 
    Therefore, from Theorem~\ref{thm:main-bound} and the Triangle Inequality we obtain that
    \begin{align*}
     \dfrac{q^n-1}{\varphi(q^n-1)}\mathcal{P}(\Sc)&\geq (q-1)^n-1-\left|\sum_{d\mid q^n-1 \atop d\neq 1}\dfrac{\mu(d)}{\varphi(d)}\sum_{\chi\in\Lambda(d)}\sum_{\omega\in\Sc}\chi(\omega)\right|\\
     &\geq(q-1)^n-1-\sum_{d\mid q^n-1\atop d\neq 1,\mu(d)\neq0}\dfrac{|\mu(d)|}{\varphi(d)}\sum_{\chi\in\Lambda(d)}\left|\sum_{\omega\in\Sc}\chi(\omega)\right|\\
     &\geq(q-1)^n-1-\sum_{d\mid q^n-1 \atop d\neq 1,\mu(d)\neq0}\dfrac{1}{\varphi(d)}\sum_{\chi\in\Lambda(d)}\delta(q, n)\\
     &=(q-1)^n-1-\delta(q, n)\sum_{d\mid q^n-1 \atop d\neq 1,\mu(d)\neq0}1\\
     &>(q-1)^n-\delta(q, n)\cdot W(q^n-1).
\end{align*}
\end{proof}


\subsection{Concrete results: proof of Theorem~\ref{thm:main}}
In this section we provide the proof of Theorem~\ref{thm:main}. Here and throughout, $C=\{\mathcal A_1, \ldots, \mathcal A_n\}$ is a collection of $\F_q$-affine hyperplanes of $\F_{q^n}$ in general position and $\Sc=\cup_{i=1}^n\F_{q^n}\setminus \mathcal A_i$. Theorem~\ref{thm:prim} entails that $\Sc$ contains a primitive element whenever the following inequality holds
\begin{equation}\label{eq:b}W(q^n-1)\le \left(\frac{q-1}{2\sqrt{q}}\right)^n.\end{equation}
Since $W(t)\ge 2$ for every $t\ge 2$, we have the trivial restriction $q-1>2\sqrt{q}$, i.e., $q\ge 7$. Moreover, if $q\ge 7$ is fixed, item 3 of Lemma~\ref{lem:tech-2} entails that Eq.~\eqref{eq:b} holds if $n$ is large enough. The latter proves item (iv) of Theorem~\ref{thm:main}.

From now and on, we assume that $q\ge 7$. If we set $F_1(t)=4.9 \cdot t^{1/4}, F_2(t)=4514.7 \cdot t^{1/8}$ and $F_3(t)=t^{\frac{0.96}{\log\log t}}$, Lemma~\ref{lem:tech-2} entails that 
$W(q^n-1)\le \min\limits_{1\le i\le 3} F_i(q^n)$.
So it suffices to have the following inequality
\begin{equation}\label{eq:c}\min _{1\le i\le 3} F_i(q^n)^{1/n}\le \frac{q-1}{2\sqrt{q}}.\end{equation}
We observe that the functions $F_i(q^n)^{1/n}$ are decreasing on $n$ in the range $q\ge 7$ and $n\ge 2$. By a direct computation we obtain Table~\ref{tab:my_label}, that displays some ranges where Eq.~\eqref{eq:c} holds. 

\begin{table}[h!]
    \centering
    \begin{tabular}{|c|c|c|}\hline 
         $q$ & $n$ & $F_i$ \\ \hline 
         $\ge 389$ & $\ge 2$  & $F_1$\\ \hline
         $\ge 76$ & $\ge 4$ & $F_3$\\ \hline 
         $\ge 16$ & $\ge 25$& $F_3$\\ \hline 
         $\ge 13$ & $\ge 45$ &  $F_2$\\ \hline
         $\ge 11$ & $\ge 76$ & $F_2$\\ \hline
    \end{tabular}
    \caption{Pairs $(q, n)$ satisfying Eq.~\eqref{eq:c} and the function $F_i$ employed.}
    \label{tab:my_label}
\end{table}

For the finite set $\mathcal X$ of pairs in the range $q\ge 11$ and $n\ge 2$ that are not included in Table~\ref{tab:my_label}, we proceed to direct computations:
\begin{enumerate}[(a)]
    \item  Theorem~\ref{thm:prim} also entails that $\Sc$ contains a primitive element whenever the following inequality holds $$(q-1)^n>\delta(q, n) W(q^n-1).$$

\item It is known that there exist $\varphi(q^n-1)$ primitive elements in $\F_{q^n}^*$. Since the set $\Sc$ contains $(q-1)^n$ elements, such a set contains a primitive element whenever the following inequality holds $$(q-1)^n+\varphi(q^n-1)>q^n.$$
\end{enumerate}
Using a SageMATH program, we directly verify that, with the exception of the pairs
$$(q, n)=(13, 4); (11, 4); (11, 6); (11, 12),$$
the elements of $\mathcal X$ satisfy the inequality in one of the items (a) or (b) above. This completes the proof of Theorem~\ref{thm:main}.

\section{Conclusions}
Motivated by works on special elements of finite fields with restrictions on their digits, this paper discussed the existence of primitive elements of finite fields avoiding affine hyperplanes in general position. We obtained a complete result for extensions $\F_{q^n}$ with $q\ge 16$ and $n\ge 2$. Moreover, the cases $q=11, 13$ yield only $4$ possible exceptions and we also obtained an asymptotic result for $q=7, 8, 9$.

By using a sieving method that is traditional in this kind of problem (see~\cite{cohen-trud}), one can also check that the pair $(q, n)=(11, 12)$ is not a genuine exception in Theorem~\ref{thm:main}. It would be interesting to achieve concrete results for $q=7, 8, 9$ or even discuss asymptotic results for $q=3, 4, 5$ (the case $q=2$ is a trivial exception for every $n\ge 2$). In order to accomplish the latter, we believe that sharper bounds on multiplicative characters sums related to this problem must be obtained. 

\begin{center}{\bf Acknowledgments}\end{center}
We would like to thank Fabio Brochero for providing helpful suggestions in an earlier version of this work.




\begin{thebibliography}{99}
\bibitem{cohen-trud} 
S.~D.~Cohen., T.~Oliveira e Silva and T.~Trudgian.
\newblock On consecutive primitive elements in a finite field. 
\newblock {\em Bull. Lond. Math. Soc.} 47(3): 418--426, 2015.





\bibitem{dig2} C.~Dartyge, A.~Sárközy. 
\newblock {The sum of digits function in finite fields. }
\newblock {\em Proc. Amer. Math. Soc.} 141: 4119--4124,  2013.


\bibitem{dig2'} C.~Dartyge, C.~Mauduit, A.~Sárközy.
\newblock{ Polynomial values and generators with missing digits in finite
fields.} 
\newblock {\em Funct. Approx. Comment. Math.} 52: 65--74, 2015.

\bibitem{sh} R.~Dietman, C.~Elsholtz, I. Shparlinski.
\newblock{Prescribing the binary digits of squarefree numbers and quadratic residues}
\newblock{\em Trans. Amer. Math. Soc.} 369: 8369--8388, 2017.



\bibitem{GS} K.~Gyarmati, A.~Sárközy, Equations in finite fields with restricted solution sets. I (Character sums). {\em Acta Math. Hung.} 118: 129--148, 2008.


\bibitem{KR}
G.~Kapetanakis and L.~Reis.
\newblock Variations of the Primitive Normal Basis Theorem.
\newblock {\em Des. Codes Cryptogr.} 87(7): 1459--1480, 2019.

\bibitem{LN} R.~Lidl and H.~Niederreiter.
\newblock {\em Finite Fields.} (Encyclopedia of Mathematics and its Applications). 
\newblock Cambridge: Cambridge University Press, 1996.



\bibitem{MR} C.~Mauduit, J.~Rivat. 
\newblock{La somme des chiffres des carr\'es.}
\newblock{\em Acta Math.} 203 (1): 107--148, 2009.


\bibitem{MR2}C.~Mauduit and J.~Rivat.
\newblock {Sur un probl\'eme de Gelfond: la somme des chiffres
des nombres premiers.}
\newblock{\em Ann. of Math.} 171 (3): 1591--1646, 2010.

\bibitem{R21} L.~Reis.
\newblock Arithmetic constraints of polynomial maps through discrete logarithms.
\newblock {\em J. Number Theory} (2020), \url{https://doi.org/10.1016/j.jnt.2020.10.015}.

\bibitem{R21'} L.~Reis.
\newblock{Some applications of character sums over affine subspaces.}
\newblock (submitted), (2021). 


\bibitem{dig4} C. Swaenepoel.
\newblock Prescribing digits in finite fields. 
\newblock {\em J. Number Theory} 189: 97--114, 2018.

\bibitem{dig5} C. Swaenepoel.
\newblock On the sum of digits of special sequences in finite fields. 
\newblock {\em Monatsh. Math.} 187: 705--728, 2018.


\end{thebibliography}
\end{document}